\theoremstyle{definition}
\numberwithin{equation}{section}
\newtheorem{theorem}{Theorem}[section]
\newtheorem{lemma}[theorem]{Lemma}
\newtheorem{corollary}[theorem]{Corollary}
\theoremstyle{definition}
\newtheorem{definition}[theorem]{Definition}
\newtheorem{remark}[theorem]{Remark}
\newtheorem{example}[theorem]{Example}
\newtheorem{examples}[theorem]{Examples}
\begin{document}


\baselineskip=17pt




\newcommand{\lcm}{\operatorname{lcm}}
\def\C{{\mathbb C}}
\def\Z{{\mathbb Z}}
\def\Ok{{\mathcal{O}_K}}
\def\Q{{\mathbb Q}}
\def\RR{\mathfrak{R}}
\def\AP{\mathop{\sf AP}}
\def\Mult{{\mathsf M}}
\makeatletter
\def\imod#1{\allowbreak\mkern10mu({\operator@font mod}\,\,#1)}
\def\imod#1{\allowbreak\mkern10mu({\operator@font mod}\,\,#1)}
\makeatother

\title{Arithmetic Progressions in a Unique Factorization Domain}

\author{Sudhir R. Ghorpade}
\address{Department of Mathematics, 
Indian Institute of Technology Bombay, 
Powai, Mumbai 400076, India.}
\email{srg@math.iitb.ac.in}

\author{Samrith Ram}
\address{Department of Mathematics,
Indian Institute of Technology Bombay, 
Powai, Mumbai 400076, India.}
\email{samrith@gmail.com}

\subjclass[2010]{Primary 11B25, 13F15; Secondary 11A05, 13A05.}
\date{\today}

\begin{abstract}
Pillai showed that any sequence of consecutive integers with at most 16 terms 
possesses one term that is relatively prime to all the others. We give a new proof of a slight generalization of this result to arithmetic progressions of integers and further extend it 
to arithmetic progressions in unique factorization domains of characteristic zero. 
\end{abstract}

\subjclass[2010]{Primary 11B25, 13F15; Secondary 11A05, 13A05.}

\keywords{Consecutive integers, arithmetic progressions, unique factorization domain, B\'{e}zout  domain, GCD domain, decomposition number.}

\maketitle

\section{Introduction}
In an attempt to prove a conjecture that products of consecutive integers are never perfect powers, Pillai \cite{P1} (see also \cite{BT}), in the early 1940's, considered the problem of finding sets of positive integers having the  property that they possess an element that is relatively prime to all the rest. He showed that in any set of at most 16 consecutive integers there exists one that is relatively prime to the others. In addition, he proved that for $17 \leq m \leq 430$, there exist infinitely many sets of $m$ consecutive integers which possess no element that is relatively prime to all the rest. Pillai, in fact, believed that the latter result is true for all $m \geq 17$, and this was soon confirmed by Brauer \cite{Br} and independently by Pillai himself \cite{P3,P4} using a result of Erd{\H{o}}s \cite[Theorem II]{Erdos}; for more recent proofs, 
one may refer to Evans \cite{Ev1}, Harborth \cite{Har}, and Eggleton \cite{Egg}.
In what follows, we shall refer to the former result as the Pillai Theorem and the latter as the Brauer-Pillai Theorem. 

It is not difficult to show that the Pillai Theorem is applicable not only to sets of at most 16 consecutive integers, but also to arithmetic progressions of integers with at most 16 terms, and we will refer to this fact 
as the Generalized Pillai Theorem. 
While we have not found in the existing literature a proof of this  
fact (see, however, Remark \ref{ConsecAndAP}), 
analogues of the Brauer-Pillai Theorem for arithmetic progressions are 
readily found, and we cite, in particular, the works of 
Evans \cite{Ev}, Ohtomo and Tamari \cite{Ohto}, and of Hajdu and Saradha \cite{HajSar}. 
Moreover, numerous extensions, analogues and generalizations of the Brauer-Pillai Theorem have been considered in the recent past; see, for example,
Caro \cite{Caro}, Gassko \cite{Gas}, Saradha and Thangadurai \cite{Sar}, and also the works cited above.
It may be remarked that all these extensions are 
in the setting of integers and use techniques from elementary 
or analytic number theory.  

We consider in this paper an extension of the Generalized Pillai Theorem in a 
wider algebraic context. Thus, we ask, if a similar result holds for Gaussian integers, or more generally, for rings of integers of algebraic number fields of class number one, or even more generally, for arbitrary integral domains where the notion of GCD (and hence of two elements being relatively prime) makes sense. 
Our main result is an analogue of the Generalized Pillai Theorem for the
so-called $\sigma$-atomic GCD domains of characteristic zero, and in particular, for arbitrary unique factorization domains of characteristic zero. This is achieved partly by introducing 
an invariant associated to an integral domain,  called its \emph{decomposition number}.
It is then proved that if $R$ is a UFD of characteristic zero with decomposition number $\delta_R$ and if $N:=\min\{16, 1+{\delta}_R\}$, then any arithmetic progression of at most $N$ terms 
with the first term coprime to the common difference contains a term that is relatively prime to all the rest. 
It is also shown that
$N:=\min\{16, 1+ \delta_R\}$ is the maximum possible number with this property.
As a special case, one sees that the Generalized Pillai Theorem holds for the Gaussian integers with $16$ replaced by~$6$. Our proof of the general result makes use of the corresponding result 
for integers. With this in view, and in a bid to make this paper self-contained, we include in Section \ref{Pillai} below a fairly short proof of the Generalized Pillai Theorem for arithmetic progressions of integers, which gives, in particular, a new proof of the Pillai Theorem. Some ring theoretic preliminaries and the notion of decomposition number are discussed in Section \ref {general}. The main result is proved in Section~\ref{last}.

\section{Arithmetic Progressions of Integers}
\label{Pillai}
Let us begin with some notations and terminology, which will be used in the remainder of this paper. 
Let $R$ be an integral domain. For $r \in R$ and $S \subseteq R$, we denote  by $\Mult(r,S)$ the set $\{s\in S:r \mid s\}$ of all multiples of 
$r$ in $S$. 
For $a,d \in R$ and a positive integer $n$, we denote by $\AP(a,d,n)$ the set $\{a,a+d,\ldots,a+(n-1)d\}$ of elements of the arithmetic progression with $n$ terms having $a$  as its first term and $d$ the common difference. Further, if $R$ is a GCD domain (i.e. an integral domain in which any two elements have a greatest common divisor) and $m,n$ are positive integers, then we shall write $\{a_1,a_2,\ldots,a_m\} \perp \{b_1,b_2,\ldots,b_n\}$ to mean that $\gcd(a_i,b_j )=1$ for $1\leq i\leq m$ and $1\leq j \leq n$. Here, as usual, $\gcd(a,b)$ denotes a greatest common divisor of $a,b\in R$, and although it is determined only up to multiplication by a unit, statements such as ``$\gcd(a,b)=1$'' or ``$\gcd(a,b)$ divides $c$'' 
have an unambiguous and obvious meaning, and we shall continue to use them. Of course if $R=\Z$ is the ring of integers, then $\gcd(a,b)$ is  unique since we require it to be positive if $a,b\in \Z$ are not both zero and set $\gcd(0,0):=0$. Also if $R=\Z$ and $n$ is a positive integer, then $\le$ will denote the componentwise partial order on $\Z^n$ so that for any \mbox{$a_1, \dots , a_n, b_1, \dots , b_n\in \Z$}, 
$(a_1,\ldots,a_n)\leq (b_1,\ldots,b_n) \Longleftrightarrow a_i\leq b_i$ for all $i=1,\ldots,n$. Finally, for a finite set $A$, we denote by $|A|$ the cardinality of $A$. 
  
\begin{theorem}[Generalized Pillai Theorem] 
\label{progressions}
Let $a,d$ be coprime integers and let $n$ be a positive integer $ \leq 16$. Then the arithmetic progression $a,a+d,\ldots,a+(n-1)d$ contains a term that is relatively prime to all the others.
\end{theorem}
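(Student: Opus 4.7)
The plan is to recast the problem as a sieving question on the index set $\{0,\dots,n-1\}$ and to resolve it via inclusion--exclusion combined with case analysis on small primes.

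First I would observe that any prime $p$ dividing two distinct terms $a+id$ and $a+jd$ must also divide $(j-i)d$. Since $\gcd(a,d)=1$ together with $p\mid a+id$ forces $p\nmid d$, it follows that $p\mid j-i$, and hence $p\le n-1\le 15$. So only the primes in $P:=\{2,3,5,7,11,13\}$ can create a common factor between two terms of the progression, and the theorem reduces to finding an index $i$ such that no $p\in P$ divides both $a+id$ and some other $a+jd$.

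For each $p\in P$ define the hit set $T_p:=\{i\in\{0,\dots,n-1\}:p\mid a+id\}$. Since $d$ is a unit modulo $p$ whenever $p\nmid d$, the set $T_p$ is either empty (if $p\mid d$) or the intersection of a single residue class modulo $p$ with $\{0,\dots,n-1\}$, so $|T_p|\in\{\lfloor n/p\rfloor,\lceil n/p\rceil\}$. Call $p$ \emph{active} when $|T_p|\ge 2$. An index $i$ is good (meaning $a+id$ is coprime to every other term) precisely when $i$ lies outside the set $B:=\bigcup\{T_p:p\in P\text{ active}\}$, so it suffices to prove $B\neq\{0,\dots,n-1\}$.

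To bound $|B|$ I would use inclusion--exclusion after a useful case split. The Chinese Remainder Theorem yields $T_p\cap T_q=T_{pq}$ with $|T_{pq}|\in\{\lfloor n/pq\rfloor,\lceil n/pq\rceil\}$ for distinct $p,q\in P$. Branching on which primes in $P$ divide $d$ (these are automatically inactive) is convenient; in the main subcase, where neither $2$ nor $3$ divides $d$, the large contributions come from $|T_2|\le 8$ and $|T_3|\le 6$, and sieving modulo $6$ first leaves only the indices in two residue classes modulo $6$, which is roughly $n/3$ candidates. One then verifies that $T_5,T_7,T_{11},T_{13}$ cannot cover all of this residual set.

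The main obstacle is the boundary case $n=16$ with all six primes in $P$ active: here $\sum_{p\in P}|T_p|$ can reach $25$, so no crude union bound suffices. One has to track the intersection sizes $|T_6|,|T_{10}|,|T_{14}|,|T_{15}|$ (each bounded below by $\lfloor 16/pq\rfloor$, giving $2,1,1,1$ respectively), higher-order overlaps, and the possible alignments of the residue classes $r_p$ that determine $T_p$. I expect that for any configuration of $(r_2,r_3,r_5,r_7,r_{11},r_{13})$ a direct combinatorial argument---organized by cases on $r_2$ and $r_3$, and exploiting that the ``coprime to $6$'' survivors are themselves a short arithmetic progression---exhibits an index in $\{0,\dots,15\}$ lying in no active $T_p$. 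Keeping this analysis structural enough that the proof remains ``fairly short'' is where I anticipate the real work.
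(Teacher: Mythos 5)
Your setup is sound and matches the paper's starting point: the observation that a prime dividing two terms must divide the difference of the indices (the paper's Lemma~\ref{le1}), hence only the primes $2,3,5,7,11,13$ matter, and the count $|T_p|\le\lceil n/p\rceil$ (the paper's Corollary~\ref{mult}). The equivalence ``$i$ is good iff $i$ lies outside the union of the active $T_p$'' is also correct. But the proof stops exactly where the theorem's content begins: for $n$ around $9$--$16$ the union bound fails (as you note, the naive sum can reach $25>16$), and your resolution of this is only the statement that you ``expect'' a configuration-by-configuration combinatorial argument over the residues $(r_2,r_3,r_5,r_7,r_{11},r_{13})$ to produce a good index. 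That expectation is not a proof, and it cannot be waved through as routine: the bound $16$ is sharp (for every $n\ge 17$ there are progressions, indeed blocks of consecutive integers, with no term coprime to the rest), so any argument must exploit the exact interplay of the residue classes, and an unstructured sweep over all residue tuples is a large finite check that you have neither organized nor executed. As written, the hard cases --- precisely the ones Pillai's theorem is about --- are missing.

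For comparison, the paper closes this gap with two devices you may want to adopt. First, a parity reduction: for $d$ odd, two short lemmas show that the case $n=2k$ follows from $n=2k-1$ and the case $n=2k+1$ (with $a,d$ both odd) follows from $n=2k$, because the index $m$ of the good term in the shorter progression is forced to be odd and then $\gcd(a_1,a_m)\mid (m-1)/2$ with $(m-1)/2\le m-2$ already coprime to $a_m$. This cuts the problem down to $d$ even (easy) plus odd $n\in\{7,9,11,13,15\}$ with $a$ even, $d$ odd. Second, for those five values the paper sieves inside the subprogression $B$ of odd terms (an arithmetic progression with difference $2d$, of length about $n/2$) and uses $|\Mult(m,B)|\le\lceil |B|/m\rceil$ together with short ad hoc arguments when equality forces the multiples of $3$ or $5$ to sit at the ends of $B$. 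If you want to salvage your plan, you need to supply an analogous explicit and exhaustive treatment of the residual cases rather than asserting one exists.
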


We first make an elementary observation and record a useful consequence thereof. 

\begin{lemma}
\label{le1}
If $R$ is a GCD domain, $a,d \in R$ are coprime and $r,s$ are nonnegative integers, then $$\gcd(a+rd,a+sd)\mid (r-s).$$
\end{lemma}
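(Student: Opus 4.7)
The plan is to let $g = \gcd(a+rd, a+sd)$ and derive two divisibility facts about $g$ that together force $g \mid (r-s)$. The key observation is that $g$ divides the difference $(a+rd) - (a+sd) = (r-s)d$, so everything reduces to eliminating the factor of $d$ from this product.

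First I would show that $g$ is coprime to $d$. Indeed, any common divisor of $g$ and $d$ must also divide $(a+rd) - rd = a$, and hence must divide $\gcd(a,d) = 1$; so $\gcd(g,d) = 1$. Next, I would invoke the standard Euclid-type property of GCD domains: if $\gcd(g,d) = 1$ and $g$ divides a product $xd$, then $g \mid x$. Applied to $x = r-s$ (viewed as an element of $R$ via the canonical ring map $\Z \to R$), this yields $g \mid (r-s)$, which is exactly the claim.

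The only thing to double-check is that Euclid's lemma really is available in a GCD domain, since the paper is being careful to work below the UFD hypothesis. This is standard: in a GCD domain one has the identity $\gcd(g, xd) = \gcd(g, x)\cdot \gcd(g,d)$ up to units (or more directly, $\gcd(gx, dx) = x\gcd(g,d)$), from which $\gcd(g,d) = 1$ and $g \mid xd$ imply $g \mid \gcd(g,xd) = \gcd(g,x)\cdot 1$, hence $g \mid x$. I expect this to be the only subtle point in the proof; the rest is just the manipulation of the difference $(a+rd)-(a+sd)$.
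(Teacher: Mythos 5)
Your proof is correct, and it differs from the paper's only in how the factor $d$ is eliminated from $(r-s)d$. The paper forms a second linear combination, $s(a+rd)-r(a+sd)=(s-r)a$, so that $g=\gcd(a+rd,a+sd)$ divides both $(r-s)d$ and $(r-s)a$, and then concludes directly from the distributive law $\gcd\bigl((r-s)a,(r-s)d\bigr)=(r-s)\gcd(a,d)=(r-s)$ up to units. You instead use only the first difference, prove $\gcd(g,d)=1$ by observing that a common divisor of $g$ and $d$ divides $a$, and then apply Euclid's lemma for GCD domains. Both routes rest on the same underlying fact, $\gcd(gx,dx)=x\gcd(g,d)$ up to units; the paper's version is marginally shorter since it never needs the coprimality of $g$ and $d$ as an intermediate step, while yours isolates Euclid's lemma, which is the cleaner general-purpose statement to quote. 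One caution: the unconditional identity $\gcd(g,xd)=\gcd(g,x)\cdot\gcd(g,d)$ that you state first is false in general (take $g=x=d=2$ in $\Z$: the left side is $2$, the right side is $4$); what is true, and what your ``more direct'' identity $\gcd(gx,dx)=x\gcd(g,d)$ actually yields, is that $\gcd(g,xd)=\gcd(g,x)$ once $\gcd(g,d)=1$. Since you only invoke this under that coprimality hypothesis, your argument stands as written, but the justification should be phrased via the distributive law rather than the false multiplicativity claim.
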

\begin{proof}
Any common divisor of $a+rd$ and $a+sd$ divides both $(a+rd)-(a+sd)$ and $s(a+rd)-r(a+sd)$. Since $\gcd(a,d)=1$, the lemma follows.
\end{proof}	

\begin{corollary}
\label{mult}
If $a,d$ are coprime integers and $m$ is a positive integer, then 
$$|\Mult(m,\AP(a,d,n))|\leq \lceil n/m \rceil$$ 
where $\lceil x \rceil$ denotes the least integer $\geq x$. 
\end{corollary}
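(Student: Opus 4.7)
My plan is to reduce the corollary directly to Lemma \ref{le1}. Write $\AP(a,d,n)=\{a+rd : 0\le r\le n-1\}$ and let
\[
I := \{r \in \{0,1,\ldots,n-1\} : m \mid a+rd\},
\]
so that $|\Mult(m,\AP(a,d,n))| = |I|$, since the map $r \mapsto a+rd$ is injective (we may assume $d\neq 0$; the case $d=0$ follows from coprimality of $a,d$ forcing $a=\pm 1$, making the claim trivial).

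The key step is to show that all elements of $I$ lie in a single residue class modulo $m$. Indeed, if $r,s \in I$, then $m$ divides both $a+rd$ and $a+sd$, so $m \mid \gcd(a+rd,a+sd)$; by Lemma \ref{le1}, $\gcd(a+rd,a+sd)\mid(r-s)$, and hence $m \mid (r-s)$.

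Once this is established, the bound is combinatorial: any residue class modulo $m$ intersects $\{0,1,\ldots,n-1\}$ in at most $\lceil n/m\rceil$ elements, since the number of integers in $\{0,1,\ldots,n-1\}$ congruent to a fixed $r_0$ modulo $m$ is $\lfloor (n-1-r_0)/m\rfloor + 1$, which is at most $\lceil n/m\rceil$. Therefore $|I|\le\lceil n/m\rceil$, proving the corollary.

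There is no real obstacle here: the content of Lemma \ref{le1} already does all the work. The only minor care needed is to ensure injectivity of the map $r\mapsto a+rd$ (which fails only if $d=0$, a degenerate case that must be handled separately but is immediate from $\gcd(a,d)=1$).
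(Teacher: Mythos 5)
Your proof is correct and follows exactly the intended route: the paper gives no explicit proof of Corollary \ref{mult}, presenting it as an immediate consequence of Lemma \ref{le1}, and your argument (any two indices of multiples of $m$ are congruent modulo $m$, so they lie in one residue class, giving at most $\lceil n/m\rceil$ of them) is precisely that consequence spelled out. The extra care about the degenerate case $d=0$ and the counting identity $\lfloor (n-1)/m\rfloor+1=\lceil n/m\rceil$ is fine but not needed beyond what the paper tacitly assumes.
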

     
We now proceed to prove Theorem \ref{progressions}. For $n=1$, the theorem is vacuously true. Let us first consider the case in which all terms of $\AP(a,d,n)$ are odd. The case where $n=2$ is trivial. If $n=3,4$ or 5, then the term $a+2d$ is relatively prime to all the others. If $n=6$, then 
one of $a+2d,a+3d$ is not divisible by 3 and is relatively prime to all the other terms. If $n=7,8,9,10$ or 11, one of $a+4d,\, a+5d,\, a+6d$ is coprime to both 3 and 5 and consequently is relatively prime to all the other terms. If $12\leq n\leq 16$, some element in the set 
$\{a+id:6\leq i\leq 10\}$ is coprime to 3, 5 and 7 and is relatively prime to all the others in $\AP(a,d,n)$. Thus Theorem~\ref{progressions} holds whenever all terms of $\AP(a,d,n)$ are odd, or equivalently, when $d$ is even.

It remains to consider the case when terms in the progression are alternately even and odd. The case where $n=2$ is trivial. If $n=3$, the term $a+d$ is relatively prime to the others. If $n=4$ or 5, at least two terms in the progression are odd and one of those is not divisible by 3. This number is relatively prime to the others. 
To settle the remaining cases, the following two lemmas will be useful.

\begin{lemma}
\label{le2}
Let $k$ be an integer with $3\le k\le 8$. 
Suppose Theorem~\ref{progressions} holds for any coprime integers $a,d$ with $d$ odd and for $n=2k-1$. 
Then it also holds for any coprime integers $a,d$ with $d$ odd and for $n=2k$.
\end{lemma}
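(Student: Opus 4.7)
The plan is to exploit the hypothesis by applying it twice, to the two sub-APs of length $2k-1$ obtained by dropping an endpoint of $\AP(a,d,2k)$: namely $\AP(a,d,2k-1)=\{a,a+d,\ldots,a+(2k-2)d\}$ and $\AP(a+d,d,2k-1)=\{a+d,\ldots,a+(2k-1)d\}$. Both satisfy the standing hypothesis, since $d$ is odd and $\gcd(a+d,d)=\gcd(a,d)=1$. This yields terms $a+rd$ and $a+sd$ (with $r\in\{0,\ldots,2k-2\}$ and $s\in\{1,\ldots,2k-1\}$), each coprime to the other terms of its sub-AP. If $r=s$, then every term of $\AP(a,d,2k)$ other than $a+rd$ lies in one of the two sub-APs and is therefore coprime to $a+rd$, and we are done.

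The substantive case is $r\ne s$. Among the terms of $\AP(a,d,2k)\setminus\{a+rd\}$, all but $a+(2k-1)d$ lie in the first sub-AP and so are coprime to $a+rd$; similarly, the only term other than $a+sd$ not automatically coprime to $a+sd$ is $a$. It therefore suffices to show that one of $\gcd(a+rd,\,a+(2k-1)d)$ and $\gcd(a,\,a+sd)$ equals $1$. Suppose for contradiction that both exceed $1$, and pick prime divisors $p$ and $q$ respectively. Lemma~\ref{le1} gives $p\mid 2k-1-r$ and $q\mid s$. Since $a+rd$ is coprime to every $a+id$ with $i\in\{0,\ldots,2k-2\}\setminus\{r\}$ and $p\nmid d$, the congruence $a+id\equiv(i-r)d\pmod{p}$ forces $p\nmid i-r$ for every such $i$; hence $p>\max(r,\,2k-2-r)$, which combined with $p\mid 2k-1-r$ yields $p=2k-1-r$ and $r\le k-1$. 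Analogously, $q=s$ is prime and $s\ge k$.

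The main obstacle is closing this residual configuration, and the clinching move is a parity argument. Since $k\ge 3$, each sub-AP has at least five terms, and as $d$ is odd its terms alternate in parity with at least two of each kind; hence any element of a sub-AP coprime to the other terms there must itself be odd (an even candidate would share the factor $2$ with another even term). So $a+rd$ and $a+sd$ are both odd, which, together with $d$ being odd, forces $r\equiv s\pmod{2}$. But $p=2k-1-r$ is an odd prime, so $r$ must be even, while $q=s$ is an odd prime, so $s$ must be odd --- a contradiction. Hence one of $a+rd$, $a+sd$ is coprime to every other term of $\AP(a,d,2k)$, completing the proof.
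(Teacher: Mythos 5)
Your proof is correct, but it is organized differently from the paper's. The paper applies the hypothesis only once: assuming (after possibly reversing the progression, which handles the case $a_1$ even) that the first term $a_1$ is odd, it drops $a_1$, obtains from $\AP(a_2,d,2k-1)$ an element $a_m$ coprime to all other terms of that sub-progression, notes by the same parity observation you use that $m$ must be odd, and then disposes of the single remaining term directly: $\gcd(a_1,a_m)$ is odd and divides $m-1$ by Lemma~\ref{le1}, hence divides $(m-1)/2\le m-2$, while $a_m$ is coprime to every integer in $\{1,\dots,m-2\}$ (any prime factor of $a_m$ dividing such an integer would divide another term of the sub-progression), so $\gcd(a_1,a_m)=1$. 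You instead apply the hypothesis to both length-$(2k-1)$ sub-progressions, assume both selected terms fail for the full progression, pin down the obstructing primes exactly ($p=2k-1-r$ and $q=s$, via the observation that a prime dividing the selected term cannot divide any admissible index difference), and derive a parity contradiction with $r\equiv s\pmod 2$. Both arguments rest on the same two ingredients, Lemma~\ref{le1} and the fact that the selected term must be odd; the paper's single-application version is shorter and avoids the contradiction setup and the exact determination of $p$ and $q$ (your bounds $r\le k-1$, $s\ge k$ are not even needed), while yours avoids the reversal trick and treats the two endpoints symmetrically. Either way the lemma is established.
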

\begin{proof} 
Let $a,d$ be coprime integers with $d$ odd.  
Write $a_i:=a+(i-1)d$ for $i=1, \dots , 2k$ and $A:=\AP(a,d,2k)=\{a_1,\ldots,a_{2k}\}$. 
Assume first that $a_1$ is odd. Now $A\setminus \{a_1\}=\AP(a_2,d,2k-1)$ and hence there exists $m\in \Z$ with $2\le m \le 2k$ 
such that  $a_m\perp A\setminus\{a_1,a_m\}$. This implies that $a_m\perp \{1,\ldots,m-2\}$. Moreover, $m$ is odd since both $a_1, d$ are odd and $k\ge 3$. Consequently,  $\gcd(a_1,a_m )\mid (m-1)/2$. On the other hand, $1\le (m-1)/2\leq m-2$, since $m\geq 3$. It follows that \hbox{$\gcd(a_m,(m-1)/2)=1$} and therefore $\gcd(a_1,a_m )=1$. As a result, ${a_m }\perp A\setminus\{a_m\}$. In case $a_1$ is even, $a_{2k}$ is odd and we use the same argument for $\AP(a_{2k},-d,2k)$.
\end{proof}
		    
\begin{lemma}
\label{le3} 
Let $k$ be an integer with $3\le k\le 7$. 
Suppose Theorem~\ref{progressions} holds for any odd coprime integers $a,d$ and for $n=2k$. 
Then it holds for any odd coprime integers $a,d$ and for $n=2k+1$.
\end{lemma}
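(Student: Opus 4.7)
The plan is to mirror the proof of Lemma \ref{le2}: apply the hypothesis to a sub-progression of length $2k$, extract a term that is coprime to all the others in that sub-progression, verify by a parity count that this term has odd index, and finally extend the coprimality to the one remaining term of $A = \AP(a,d,2k+1)$ using Lemma \ref{le1} together with the ``coprime-to-index-differences'' trick already used in Lemma \ref{le2}. Writing $a_i := a + (i-1)d$ for $i = 1, \ldots, 2k+1$ and noting that, since both $a = a_1$ and $d$ are odd, $a_i$ is odd precisely when $i$ is odd, the sub-progression I would use is $\AP(a_1, d, 2k) = A \setminus \{a_{2k+1}\}$, whose first term $a_1$ and common difference $d$ are odd and coprime. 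The hypothesis then yields an index $m \in \{1, \ldots, 2k\}$ with $a_m \perp \{a_1, \ldots, a_{2k}\} \setminus \{a_m\}$; note in particular that the ``missing'' term $a_{2k+1}$ is odd.

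As in Lemma \ref{le2}, $m$ is forced to be odd: the even terms in $\{a_1, \ldots, a_{2k}\}$ are $a_2, a_4, \ldots, a_{2k}$, and since $k \geq 3$ there are at least three of them, any two of which share the factor $2$, so an even $a_m$ could not be coprime to all other even terms. To finish, I would show $a_m \perp a_{2k+1}$. By Lemma \ref{le1}, $\gcd(a_m, a_{2k+1})$ divides $2k+1-m$, and since both $a_m$ and $a_{2k+1}$ are odd, this gcd is odd and therefore divides $(2k+1-m)/2$. The same trick as in Lemma \ref{le2} --- if a prime $p$ divides $a_m$ and some $i \in \{1, \ldots, 2k-m\}$, then $p \mid id$ and hence $p \mid a_m + id = a_{m+i}$, so $p \mid \gcd(a_m, a_{m+i}) = 1$, a contradiction --- yields $a_m \perp \{1, \ldots, 2k-m\}$. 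The oddness of $m$ gives $m \leq 2k-1$, so the integer $(2k+1-m)/2$ lies in $\{1, \ldots, 2k-m\}$; consequently $\gcd(a_m, (2k+1-m)/2) = 1$ and thus $\gcd(a_m, a_{2k+1}) = 1$, as required.

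The only bookkeeping step will be the inequality $(2k+1-m)/2 \leq 2k-m$, which rearranges to $m \leq 2k-1$ and is automatic from $m$ being odd and at most $2k$. The main conceptual point, in spirit, is the parity ``halving'' step: it will be essential that both $a_m$ and $a_{2k+1}$ are odd so that the bound from Lemma \ref{le1} can be halved to something already covered by $a_m \perp \{1, \ldots, 2k-m\}$; without this halving, the single value $2k+1-m$ would fall just outside the range produced by the trick, and the argument would break.
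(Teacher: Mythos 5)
Your proof is correct and is essentially the paper's argument in mirror image: the paper omits the first term $a_1$ and reconnects $a_m$ to it via $\gcd(a_1,a_m)\mid (m-1)/2\le m-2$, whereas you omit $a_{2k+1}$ and reconnect via $\gcd(a_m,a_{2k+1})\mid (2k+1-m)/2\le 2k-m$, with the same ingredients (parity forcing $m$ odd, Lemma~\ref{le1}, and the coprime-to-index-differences trick). So this is the same approach; if anything, your choice of sub-progression $\AP(a_1,d,2k)$ matches the stated hypothesis (odd first term, odd difference) slightly more directly than the paper's use of $A\setminus\{a_1\}$.
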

\begin{proof} 
The proof is similar to that of Lemma~\ref{le2}. Let $a,d$ be odd coprime integers. Write $a_i=a+(i-1)d$ for $i=1, \dots , 2k+1$ and $A=\AP(a,d,2k+1)$. By the hypothesis, there exists 
$m\in \Z$ with $2\le m \le 2k+1$ such that $a_m\perp A\setminus\{a_1,a_m\}$. This implies that $a_m\perp \{1,\ldots,m-2\}$. Moreover, $m$ is odd, $\gcd(a_1,a_m )\mid(m-1)/2$, and $(m-1)/2\leq m-2$, since $m\geq 3$. Thus $\gcd(a_m, a_1)=1$ 
and $a_m\perp A\setminus\{a_m\}$.
\end{proof}
    			    
In view of the two lemmas above and the discussion preceding it, 
it suffices to prove the theorem for coprime integers $a,d$ with $a$ even, $d$ odd, and for odd integers $n$ with $7\le n < 16$. 
Fix such $a,d,n$ and let $a_i:=a+(i-1)d$ for $i=1, \dots , n$ and $A:= \AP(a,d,n)=\{a_1,\ldots,a_n\}$. We now proceed by a case-by-case argument. 

First, suppose $n=7$. Let $B:=\{a_2,a_4,a_6\}=\AP(a_2,2d,3)$. By Corollary~\ref{mult}, $|\Mult(3,B)\cup \Mult(5,B)|\leq 2<|B|$. Hence it follows that there exists $x\in B$ such that $x\perp\{2,3,5\}$. Consequently, $x\perp A\setminus\{x\}$.

Next, suppose $n=9$. Let $B:=A\setminus \Mult(2,A)=\AP(a_2,2d,4)$. By Corollary~\ref{mult}, 
$$
\left(|\Mult(3,B)|,|\Mult(5,B)|,|\Mult(7,B)|\right)\leq (2,1,1).
$$ 
Since $|B|=4$, if there is a strict inequality in one of the coordinates, then there is $x\in B$ such that $x\perp \{2,3,5,7\}$ and consequently $x\perp A\setminus\{x\}$. If equality holds in all the coordinates, then $|\Mult(3,B)|=2$ and we must necessarily have $\Mult(3,B)=\{a_2,a_8\}$. 
But then $\{a_4,a_6\}\perp\{2,3\}$ and the one among $a_4$ and $a_6$ that is coprime to 5 is relatively prime to all other elements in $A$.
 
For $n=11$, let $B:=A\setminus \Mult(2,A)=\AP(a_2,2d,5)$. 
By Corollary~\ref{mult},
$$
\left(|\Mult(3,B)|,|\Mult(5,B)|,|\Mult(7,B)|\right)\leq (2,1,1).
$$ 
Since $2+1+1<5=|B|$, there exists $x\in B$ with $x\perp \{2,3,5,7\}$ 
and so $x\perp A\setminus\{x\}$.
 
We now consider the case $n=13$. Let $B:=A\setminus \Mult(2,A)=\AP(a_2,2d,6)$. Then $|B|=6$ and by Corollary~\ref{mult}, 
$$
\left(|\Mult(3,B)|,|\Mult(5,B)|,|\Mult(7,B)|,|\Mult(11,B)|\right)\leq (2,2,1,1).
$$
If there is a strict inequality in one of the coordinates we are through. So suppose equality holds in all coordinates. This forces $\Mult(5,B)=\{a_2,a_{12}\}$. So if we let $B_1:=\{a_4,a_6,a_8\}$, then $B_1 \perp\{2,5\}$ and by Corollary~\ref{mult}, 
$(|\Mult(3,B_1) |,|\Mult(7,B_1)|)\leq(1,1)$. Thus there exists $x\in B_1$ such that $x\perp\{2,3,5,7\}$ and hence $x\perp A\setminus\{x\}$.

Finally, suppose $n=15$. Let $B:=A\setminus (\Mult(2,A)\cup \{a_2,a_{14}\})=\AP(a_4,2d,5)$. Then $|B|=5$ and by Corollary~\ref{mult}, 
$$
\left(|\Mult(3,B)|,|\Mult(5,B)|,|\Mult(7,B)|,|\Mult(11,B)|\right)\leq (2,1,1,1).
$$
If there is a strict inequality in one of the coordinates or if 
$$
|\Mult(3,B)\cup \Mult(5,B)\cup \Mult(7,B) \cup \Mult(11,B)|<5,
$$
then we are through. So suppose equality holds in all coordinates and the four sets $\Mult(j,B)$, $j=3,5,7,11$, are disjoint. Let $B_1:= \{a_2,a_{14}\}$. Note that $\Mult(3,B_1 )=\emptyset$. If $\Mult(5,B_1)=\emptyset$, then $B_1\perp \{2,3,5,7,11\}$ and since 
$|\Mult(13,B_1)|\leq 1$, some element of $B_1$ is coprime to all the other elements in $A$. If $|\Mult(5,B_1)|=1$, then $|\Mult(3,\{a_4,a_{12}\})|=|\Mult(5,\{a_4,a_{12}\})|=1$. Consequently, there is 
$x\in \{a_6,a_8,a_{10}\}$ such that $11 \mid x$,  
and hence $x\perp \{2,3,5,7\}$. It follows that $x\perp A\setminus\{x\}$. 
This completes the proof of Theorem~\ref{progressions}.

\begin{corollary}[Pillai]
\label{Consec}
In any sequence of at most $16$ consecutive integers, there exists an element that is relatively prime to all the others.
\end{corollary}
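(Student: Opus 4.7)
The plan is to derive this directly from Theorem~\ref{progressions} by specializing the common difference to $d=1$. A sequence of $n$ consecutive integers starting at some integer $a$ is, by definition, the arithmetic progression $\AP(a,1,n)=\{a,a+1,\ldots,a+n-1\}$, so the problem immediately fits the framework of the Generalized Pillai Theorem.

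To apply Theorem~\ref{progressions}, I would verify its single hypothesis: that $\gcd(a,d)=1$. With $d=1$ this is automatic for every integer $a$, since $\gcd(a,1)=1$. Hence, for any $n$ with $1\le n\le 16$, Theorem~\ref{progressions} produces an element of $\AP(a,1,n)$ that is coprime to all the others, which is exactly what the corollary claims.

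There is no genuine obstacle here: all the real work was absorbed into the proof of Theorem~\ref{progressions}, whose case-by-case analysis (especially for the odd values $n\in\{7,9,11,13,15\}$) already subsumed the original Pillai Theorem. This corollary merely records the historically important special case $d=1$, in which the progression reduces to a run of consecutive integers, and thus retrieves Pillai's original statement with essentially no additional argument.
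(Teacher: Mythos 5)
Your proposal is correct and matches the paper's proof exactly: the corollary is obtained by taking $d=1$ in Theorem~\ref{progressions}, noting that $\gcd(a,1)=1$ holds automatically. No further comment is needed.
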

\begin{proof}
This is just the case $d=1$ of Theorem~\ref{progressions}.
\end{proof}

Recall that an integer is said to be a \emph{perfect power} if it is of the form $t^r$ where $t$ and $r$ are integers $> 1$. 
 
\begin{corollary}
Suppose  $a,d$ are coprime positive integers and $n$ is a positive integer $\le 16$ such that no term of $\AP(a,d,n)$ is a perfect power. 
Then the product $\prod_{k=0}^{n-1}{\left(a+kd\right)}$ is not a perfect power.
\end{corollary}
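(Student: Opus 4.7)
The plan is to argue by contradiction. Suppose $P := \prod_{k=0}^{n-1}(a+kd)$ is a perfect power, say $P = t^r$ with integers $t,r > 1$. First, I would invoke Theorem~\ref{progressions} to obtain an index $k_0$ such that $\alpha := a + k_0 d$ is coprime to every other term of $\AP(a,d,n)$. Writing $P = \alpha \cdot Q$, where $Q$ denotes the product of the remaining $n-1$ terms, we have $\gcd(\alpha, Q) = 1$ by the choice of $\alpha$.

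Next I would appeal to the standard consequence of unique factorization in $\Z$: two coprime positive integers whose product is an $r$-th power must individually be $r$-th powers. Applied to $\alpha Q = t^r$, this yields $\alpha = u^r$ for some positive integer $u$. If $u > 1$, then $\alpha = a + k_0 d$ is a perfect power in the sense of the definition recalled just before the corollary, directly contradicting the hypothesis on $\AP(a,d,n)$.

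The only real obstacle is the degenerate possibility $u = 1$, i.e.\ $\alpha = 1$. Since $a, d \ge 1$ and $k_0 \ge 0$, this forces $a = 1$ and $k_0 = 0$, so the coprime term furnished by Theorem~\ref{progressions} happens to be the unit $1$ itself and the argument above yields no information. To handle this case I would pass to the sub-progression $\AP(1+d, d, n-1)$, whose first term and common difference remain coprime (since $\gcd(1+d,d) = 1$) and which has at most $15$ terms. A second application of Theorem~\ref{progressions} produces some $\alpha'$ in the sub-AP that is coprime to every other term of the sub-AP; being trivially coprime to $a = 1$ as well, $\alpha'$ is coprime to every other term of the full progression $\AP(a,d,n)$. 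Moreover $\alpha' \ge 1 + d \ge 2$, so $\alpha' > 1$, and rerunning the unique-factorization step with $\alpha'$ in place of $\alpha$ forces $\alpha'$ to be a perfect power, the desired contradiction. (The case $n = 1$ is vacuous, since the product then equals $a$, which by hypothesis is not a perfect power.)
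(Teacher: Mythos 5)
Your argument is correct and follows essentially the same route as the paper: apply Theorem~\ref{progressions} to get a term coprime to the rest, use the fact that a coprime factor of an $r$-th power is itself an $r$-th power, and handle the degenerate case $a=1$ by a second application of the theorem to $\AP(1+d,d,n-1)$. You have merely spelled out in detail the steps the paper leaves implicit.
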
  

\begin{proof}
The case $n=1$ is trivial. If $n \geq 2$, then by Theorem \ref{progressions}, there is a term $x$ in $\AP(a,d,n)$ that is coprime to the other terms. If $x>1$, then the desired result is clear since $x$ is not a perfect power. In case $x=1$, we must have $a=1$ and so one can apply Theorem \ref{progressions} to $\AP(a+d, d, n-1)$. 
\end{proof}

\begin{remark}
The hypothesis above that no term of $\AP(a,d,n)$ is a perfect power is crucial since one can find infinitely many arithmetic progressions (with the first term coprime to the common difference) with $3$ terms each of which is a square. This follows from the fact that  there are infinitely many rational points on the curve $x^2+y^2=2$. For instance, $\{1,5^2,7^2\},\{7^2,13^2,17^2\}$ and $\{17^2,53^2,73^2\}$.   
\end{remark}

\begin{remark}
A remarkable theorem of Erd{\H{o}}s and Selfridge~\cite{ES} says that the product of two or more consecutive positive integers is never a perfect power.
\end{remark}

\begin{remark}
\label{evans}
As mentioned in the Introduction,
for each $n>16$ there exist blocks of $n$ consecutive integers such that they contain no integer relatively prime to all the rest. We refer to Evans~\cite{Ev1} for an elegant proof of this result.
\end{remark}

\section{GCD domains and Decomposition Numbers}
\label{general}

The notion of a GCD domain was recalled in the Introduction. 
Let us also recall that an integral domain 
$R$ is said to be a \emph{B\'{e}zout  domain} if every finitely generated ideal of $R$ is principal, 
while it is said to be \emph{atomic} if 
every nonzero nonunit in $R$ factors into a product of irreducible elements. In analogy with the latter, we shall say that an integral domain 
$R$ is \emph{$\sigma$-atomic} if every nonzero nonunit in $R$ is divisible by an irreducible element. Evidently, a unique factorization domain (UFD)
is a $\sigma$-atomic (in fact, atomic) GCD domain. The following example shows that the converse is not true.

\begin{example}
\label{entire}
Let $R$ be the ring of entire functions, i.e., complex-valued holomorphic functions on $\C$. The units of $R$ are precisely the entire functions with no zeros in $\C$ and the irreducible elements are given, up to multiplication by units, by the linear polynomials. 
Thus it is readily seen that $R$ is a $\sigma$-atomic domain. Moreover, $R$ is also a GCD domain, and in fact, a B\'{e}zout domain, thanks to a result of Helmer \cite{Hel} (which was, incidentally, published in the same year as Pillai \cite{P1}). On the other hand, since there do exist entire functions with infinitely many zeros (e.g., $\sin z$), we see that $R$ is not a UFD. More generally, 
if $R'$ is any subring of $R$ such that $R'$ strictly contains 
the subring of $R$ consisting of  the polynomial functions,  
then $R'$ is a $\sigma$-atomic GCD domain that is not a UFD. To generate more examples, 
it suffices to observe that if $S$ is a $\sigma$-atomic GCD domain that is not a UFD, then the polynomial ring 
$S[X]$ is also a $\sigma$-atomic GCD domain that is not a UFD; moreover, it is not difficult to see that $S[X]$ is neither Noetherian 
nor B\'{e}zout. 
\end{example}

In the sequel, the following version of Chinese Remainder Theorem will turn out to be useful. A proof when $R=\Z$ can be found in the book of
Ore \cite[\S 10--3]{ore} 
and it 
extends easily to the case when $R$ is any B\'{e}zout domain, or more generally, a GCD domain where the 
moduli satisfy a B\'{e}zout hypothesis such as \eqref{Bez} below. 

\begin{lemma}[Generalized Chinese Remainder Theorem]
\label{cong}
Let $R$ be a GCD domain, $m$ be a positive integer, and let $u_i,v_i \in R$ with $v_i \neq 0$  for $i=1, \dots , m$. Assume that
\begin{equation}
\label{Bez} 
\gcd{(v_i,v_j)}\in Rv_i+Rv_j \quad \text{ for  } 1\leq i,j \leq m.
\end{equation}
Then the system 
$z \equiv u_i\imod{v_i}$, $i=1, \dots , m$, 
of $m$ congruences  possesses a solution  in $R$ if and only if $\, \gcd(v_i,v_j) \mid (u_i-u_j)$  for  $1\leq i,j \leq m$.
\end{lemma}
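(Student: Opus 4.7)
My plan is to treat the two directions separately. \emph{Necessity} is straightforward: if $z$ is a simultaneous solution, then for any $i,j$ the element $\gcd(v_i,v_j)$ divides $v_i$ and hence $z-u_i$, and divides $v_j$ and hence $z-u_j$, so it divides the difference $u_i-u_j$. For \emph{sufficiency} I would induct on $m$. The case $m=1$ is trivial, and for $m=2$ the hypothesis \eqref{Bez} gives $\gcd(v_1,v_2)=\alpha v_1+\beta v_2$ for some $\alpha,\beta\in R$, while the compatibility hypothesis gives $u_2-u_1=\gamma\gcd(v_1,v_2)$ for some $\gamma\in R$; the element $z:=u_1+\gamma\alpha v_1=u_2-\gamma\beta v_2$ then satisfies both congruences.

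For the inductive step with $m\ge 3$, I would apply the inductive hypothesis to the first $m-1$ congruences to obtain $z_1\in R$ with $z_1\equiv u_i\imod{v_i}$ for $1\le i<m$. Since the common-solution set of those congruences is $z_1+RL$, where $L:=\lcm(v_1,\ldots,v_{m-1})$, the remaining congruence reduces to finding $s\in R$ with $sL\equiv u_m-z_1\imod{v_m}$, which is precisely the $m=2$ problem applied to the pair $(L,v_m)$ with data $(0,u_m-z_1)$. Its solvability requires two things: (i)~$\gcd(L,v_m)\mid u_m-z_1$, and (ii)~$\gcd(L,v_m)\in RL+Rv_m$. Point (i) follows from the standard distributive identity $\gcd(L,v_m)=\lcm\bigl(\gcd(v_1,v_m),\ldots,\gcd(v_{m-1},v_m)\bigr)$, valid in any GCD domain, together with the fact that each $\gcd(v_i,v_m)$ divides both $z_1-u_i$ (as $v_i$ does) and $u_i-u_m$ (by hypothesis), hence divides $z_1-u_m$.

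I expect (ii), the propagation of the Bézout hypothesis \eqref{Bez} from the given pairs to the new pair $(L,v_m)$, to be the most delicate step. In a B\'ezout domain it is automatic, but in the stated generality I would prove it by induction on $m-1$, reducing to the two-modulus lemma: \emph{in a GCD domain, if $\gcd(a_i,b)\in Ra_i+Rb$ for $i=1,2$, then $\gcd(\lcm(a_1,a_2),b)\in R\lcm(a_1,a_2)+Rb$.} The key calculation is to multiply the Bézout expressions $\gcd(a_i,b)=\alpha_ia_i+\beta_ib$ and use $a_1a_2=\gcd(a_1,a_2)\lcm(a_1,a_2)$ to get
\[
\gcd(a_1,b)\gcd(a_2,b)=\alpha_1\alpha_2\gcd(a_1,a_2)\lcm(a_1,a_2)+bM,
\]
where $M:=\alpha_1\beta_2a_1+\alpha_2\beta_1a_2+\beta_1\beta_2b\in R$. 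Setting $e:=\gcd(\gcd(a_1,b),\gcd(a_2,b))=\gcd(\gcd(a_1,a_2),b)$ and using the GCD-domain identity $\gcd(a_1,b)\gcd(a_2,b)=e\cdot\gcd(\lcm(a_1,a_2),b)$, one notes that $e$ divides each of $a_1,a_2,b$, and therefore divides both $\alpha_1\alpha_2\gcd(a_1,a_2)$ and $M$; dividing through by $e$ then expresses $\gcd(\lcm(a_1,a_2),b)$ as an $R$-linear combination of $\lcm(a_1,a_2)$ and $b$, completing the lemma and, upon iterating along $\lcm(v_1,\ldots,v_{m-1})=\lcm(\lcm(v_1,\ldots,v_{m-2}),v_{m-1})$, the proof.
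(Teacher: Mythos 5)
Your proposal is correct, and it is in fact more than the paper offers: the paper gives no proof of Lemma~\ref{cong} at all, merely citing Ore's book for the case $R=\Z$ and asserting that the argument ``extends easily'' to B\'ezout domains, or to GCD domains whose moduli satisfy \eqref{Bez}. Your induction is exactly the classical Ore-style reduction (combine the first $m-1$ congruences into a single congruence modulo $L=\lcm(v_1,\ldots,v_{m-1})$, then solve a two-modulus problem for the pair $(L,v_m)$), and the genuinely substantive addition is your two-modulus propagation lemma showing $\gcd(\lcm(a_1,a_2),b)\in R\lcm(a_1,a_2)+Rb$ — this is precisely the point that the phrase ``extends easily'' conceals: in a B\'ezout domain it is automatic, while in a general GCD domain it needs the computation you give. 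Your argument is sound; the only thing to make explicit in a final write-up is that the auxiliary facts you invoke — $\gcd(a,b)\lcm(a,b)=ab$, the distributivity $\gcd(\lcm(a_1,\ldots,a_k),b)=\lcm(\gcd(a_1,b),\ldots,\gcd(a_k,b))$, and the identity $\gcd(a_1,b)\gcd(a_2,b)=\gcd(a_1,a_2,b)\cdot\gcd(\lcm(a_1,a_2),b)$ — hold, up to units, in every GCD domain (for instance because the group of divisibility is a lattice-ordered, hence distributive, lattice); the unit ambiguity is harmless since membership in the ideal $R\lcm(a_1,a_2)+Rb$ is unaffected, and the cancellation of $e=\gcd(a_1,a_2,b)$ is legitimate because $e\neq 0$ when the moduli are nonzero. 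With those identities stated or referenced, your proof is complete and self-contained, which is a genuine improvement in rigor over the paper's citation-only treatment.
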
 

\begin{remark}
\label{ConsecAndAP}
As an application of Lemma \ref{cong}, let us show that the Pillai Theorem (Corollary~\ref{Consec}) and 
the Generalized Pillai Theorem (Theorem~\ref{progressions}) can be deduced from each other. 
To prove the nontrivial implication, let $a,d$ be coprime integers and $n$ be a positive integer $\le 16$. Write $a_i=a+(i-1)d$ for $i=1,\dots ,n$. By Lemma~\ref{le1}, $\gcd(a_i,a_j)\mid i-j$ for $1\le i,j\le n$. Hence by Lemma \ref{cong}, there is $z\in \Z$ such that $a_i \mid z-i\, $ for $i=1,\dots ,n$. 
Now by  Corollary~\ref{Consec}, there is $k\in \{1, \dots ,n\}$ such that $z-k$ is relatively prime to $z-j$ for all $j=1, \dots ,n$ with $j\ne k$. Consequently, $a_k$ is relatively prime to $a_j$ for all $j\ne k$.
\end{remark}

It is not difficult to show that in a GCD domain, irreducible elements are always prime. In what follows, prime elements of an arbitrary integral domain may simply be referred to as primes, and this terminology should not be confused with prime ideals.  
Also, following the standard conventions of number theory, we will use the term rational prime to mean a (positive) prime number in $\Z$. 
Now here is a definition that will play a crucial role in the proof of our main theorem.

\begin{definition}
 Let $\RR$ be an integral domain with multiplicative identity $1_\RR$. The \emph{decomposition number} of $\RR$, denoted by ${\delta}_\RR$, is 
 the smallest 
rational prime $p$ such that $p \cdot 1_\RR$ is divisible by at least two distinct (i.e., up to multiplication by units) prime elements in $\RR$. If no prime in $\Z$ is divisible by two distinct primes in $\RR$, then we define ${\delta}_\RR$ to be $\infty$.  
\end{definition}

It seems worthwhile to illustrate this notion with several examples. 

\begin{examples}
(i) Clearly,  ${\delta}_\Z=\infty$. Also, if $K$ is a field, then ${\delta}_K=\infty$.
\smallskip 

(ii) If $A$ is an integral domain, then ${\delta}_{A[[X]]}={\delta}_{A[X]}={\delta}_A$. 
\smallskip
 
(iii) If $R$ is the ring of entire functions, then ${\delta}_R=\infty$. 
%
\smallskip
 
(iv) Suppose $\RR=\Z[\alpha]$ is a UFD for some complex number $\alpha$ satisfying a monic irreducible polynomial $f(X)\in \Z[X]$. Then using a well-known result of Kummer-Dedekind, we see that ${\delta}_\RR$ is the smallest 
rational prime $p$ such that the image of $f(X)$ in $\Z/p\Z[X]$ is divisible by two distinct irreducible polynomials in $\Z/p\Z[X]$. The next two examples are special cases of this. 
\smallskip
 
(v) Let $K=\Q(\sqrt{m})$ for some squarefree $m\in \Z$ such that $\RR=\Ok$ is a UFD. If $m\equiv 1$(mod 8), then ${\delta}_\RR=2$ and for 
other values of $m$, ${\delta}_\RR$ is the smallest rational prime $p$ 
such that $p$ is odd and $\left(\frac{m}{p}\right)=1$. (Here $\left(\frac{\cdot}{\cdot}\right)$ denotes the \emph{Legendre symbol}). In particular, 
 ${\delta}_{\Z[i]}=5$. 
\smallskip

(vi) Suppose $\zeta_m$ is a primitive $m$-th root of unity such that $\RR=\Z[\zeta_m]$ is a UFD. (The precise values of $m$ for which $\Z[\zeta_m]$ is a UFD are known; cf.~\cite{MaM}). For any rational prime $p$, let $p^{v_p(m)}$ be the highest power of $p$ dividing $m$ and let $\ell(m,p):= m/p^{v_p(m)}$. 
Then from \cite[Theorem 2]{Bach}, 
we see that ${\delta}_\RR$ is the smallest rational prime $p$ 
for which $\ell(m,p)>1$ and $p$ is not a primitive root modulo ${\ell(m,p)}$.
\end{examples}

\section{Arithmetic Progressions in GCD domains}
\label{last}

For an integral domain $\RR$, we shall denote by $\Z_{\RR}$ the prime subring of $\RR$. In case $\RR$ is of characteristic zero, $\Z_{\RR}$ can be identified with $\Z$. 

\begin{theorem}
\label{gen}
Let $\RR$ be a $\sigma$-atomic GCD domain of characteristic $0$. 
Then we have the following:
\begin{enumerate} 
\item[{\rm (i)}] 
If $n$ is a positive integer 
$\le \min{\{16,1+{\delta}_\RR\}}$, then for any coprime $a,d \in \RR$,  the arithmetic progression $\AP(a,d,n)$ contains a term that is relatively prime to all the others. 
\item[{\rm (ii)}]
Assume that no prime of $\Z_{\RR}$ 
is a unit in $\RR$. Then for each integer $n$ with $n>\min{\{16,1+{\delta}_\RR\}}$, there exists an arithmetic progression $\AP(a,d,n)$ in $\RR$,  
where $a,d \in \RR$ are coprime, such that none of its terms is relatively prime to all the others.
\end{enumerate}
\end{theorem}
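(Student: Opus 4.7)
The plan for part (i) is to argue by contradiction and reduce to the integer Pillai theorem via the Chinese Remainder Theorem in $\Z$. Suppose $a, d \in \RR$ are coprime, $n \leq \min\{16, 1+\delta_\RR\}$, and no term of $\AP(a, d, n)$ is coprime to all the others. For each $i$, I would use $\sigma$-atomicity to pick a prime $\pi_i \in \RR$ and an index $j_i \neq i$ with $\pi_i \mid \gcd(a_i, a_{j_i})$. Since $\gcd(a,d)=1$ forces $\pi_i \nmid d$, Lemma~\ref{le1} then gives $\pi_i \mid (i - j_i)$ in $\RR$; tracing this through the prime factorization of the nonzero integer $i - j_i$ shows $\pi_i$ divides a unique rational prime $p_i$ (distinct rational primes being comaximal in $\Z$ and hence in $\RR$), and $p_i \mid (i - j_i)$ in $\Z$, so $p_i \leq n - 1 \leq \delta_\RR$.

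The core step will be to verify that the congruence system $N \equiv 1 - i \pmod{p_i}$ for $i = 1, \ldots, n$ is consistent in $\Z$. If $p_i = p_{i'} = p$ for some $i \neq i'$, either $p < \delta_\RR$ (so the prime above $p$ is unique up to units, $\pi_i$ and $\pi_{i'}$ are associates, and both divide $a_i$ and $a_{i'}$, forcing $p \mid i - i'$) or $p = \delta_\RR$ (so $p \mid i - j_i$ with $|i - j_i| \leq n - 1 \leq \delta_\RR$ forces $\{i, j_i\} = \{1, n\}$, and both congruences read $N \equiv 0 \pmod{\delta_\RR}$). By CRT in $\Z$ I can then choose a large positive $N$ solving the system. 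The consecutive positive integers $b_i := N + i - 1$ satisfy $p_i \mid b_i$ and $p_i \mid b_{j_i} = b_i + (j_i - i)$, so no $b_i$ is coprime to all the others, contradicting Corollary~\ref{Consec} since $n \leq 16$.

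For part (ii), the plan is to split on $n$. If $n > 16$, I would invoke the integer Brauer-Pillai theorem to obtain coprime integers $A, D$ such that no term of $\AP(A, D, n)$ is coprime to all the others in $\Z$; embedded in $\RR$, the Bezout identity in $\Z$ gives $\gcd_\RR(A, D) = 1$, and the hypothesis that no rational prime is a unit in $\RR$ ensures each common rational prime divisor of $\gcd_\Z(a_i, a_j)$ remains a non-unit in $\RR$. If $1 + \delta_\RR < n \leq 16$ (forcing $\delta_\RR \leq 13$), I would set $p := \delta_\RR$, fix distinct primes $\Pi_1, \Pi_2 \in \RR$ above $p$, and take $d := 1$ and $a := \Pi_1 \cdot \prod_{q \in Q} \pi_q \cdot c$, where $Q$ is a finite set of rational primes $\neq p$, each $\pi_q$ is a prime of $\RR$ above $q$, and $c \in \RR$ is a lift of $-\bigl(\Pi_1 \prod_{q \in Q} \pi_q\bigr)^{-1}$ from the residue field $\RR/(\Pi_2)$ (well-defined because $\Pi_1$ and each $\pi_q$ are coprime to $\Pi_2$, so their product is a unit modulo $\Pi_2$). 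Then $\gcd(a, d) = 1$, and direct checking yields $\Pi_1 \mid a_i \iff p \mid i - 1$, $\Pi_2 \mid a_i \iff p \mid i - 2$, and $\pi_q \mid a_i \iff q \mid i - 1$ for $q \in Q$. If $Q$ is chosen so that every $i \in \{1, \ldots, n\}$ lies in some residue class of size $\geq 2$ (relative to $p$ via $\Pi_1$ or $\Pi_2$, or relative to some $q \in Q$ via $\pi_q$), no $a_i$ is coprime to all the others.

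The hard part will be justifying the existence of $Q$ in Case B of part (ii). This reduces to the observation that any $i \in \{1, \ldots, n\} \setminus \{1, 2, p+1, p+2\}$ has $i - 1 \leq 15$, which is either a power of $p$ (in which case $\Pi_1$ already covers $i$, as $i \equiv 1 \pmod p$) or has a prime factor $q \neq p$ with $q \leq n - 1$, so the residue class $\{1, q+1, 2q+1, \ldots\} \cap \{1, \ldots, n\}$ has size $\geq 2$ and one includes $q$ in $Q$. A secondary subtlety is that the Generalized CRT of Lemma~\ref{cong} is not automatic in a $\sigma$-atomic GCD domain, so the construction of $a$ must be explicit (as above) rather than abstract; the free parameter $c$ handles the single residue condition modulo $\Pi_2$ directly, sidestepping this difficulty.
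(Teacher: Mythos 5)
Your part (i) is correct, and it is a genuinely simpler route than the paper's. The paper builds auxiliary elements $s_i$ from the offending primes, invokes the generalized Chinese Remainder Theorem (Lemma~\ref{cong}) inside $\RR$, contracts to integers $t_i$ with $s_i\RR\cap\Z_\RR=t_i\Z_\RR$, and only then descends to a bad block of consecutive integers. You bypass all of this: each witness prime $\pi_i$ divides the nonzero integer $(i-j_i)1_\RR$, hence lies over a unique rational prime $p_i\le n-1\le\delta_\RR$, and ordinary CRT in $\Z$ suffices once you check consistency; your two-case check (associate primes when $p<\delta_\RR$, and the forced configuration $\{i,j_i\}=\{1,n\}$ when $p=\delta_\RR$) is exactly the place where the decomposition number enters, and it is sound. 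This avoids Lemma~\ref{cong} in $\RR$ entirely, which is a real simplification.

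Part (ii) is where there is a genuine gap. In Case B you need $c$ with $\Pi_1\prod_{q\in Q}\pi_q\cdot c\equiv -1 \pmod{\Pi_2}$, and you justify this by ``$\Pi_1$ and each $\pi_q$ are coprime to $\Pi_2$, so their product is a unit modulo $\Pi_2$.'' That inference is false in a general $\sigma$-atomic GCD domain (even in a UFD): $\RR/(\Pi_2)$ is a domain but need not be a field, and coprime-to-$\Pi_2$ only gives a nonzero, not an invertible, residue. Concretely, take $\RR=\Z[x,y]/(xy-5)$, a UFD of characteristic $0$ with units $\pm1$; here $2$ and $3$ stay prime, $5=xy$ with $x,y$ non-associate primes, so $\delta_\RR=5$ and Case B applies for $7\le n\le 16$. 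With $\Pi_2=y$ one has $\RR/(\Pi_2)\cong\mathbb{F}_5[x]$ and $\Pi_1=x$ maps to the non-unit $x$, so $\Pi_1\prod\pi_q\cdot c$ always lands in $(x)$ and can never be $\equiv-1$; no choice of $c$ exists (and by symmetry the same happens with the roles of $x,y$ swapped). So the explicit-lift trick does not actually sidestep the CRT difficulty you correctly flagged for Lemma~\ref{cong} --- it runs into the same obstruction. Worse, in this ring the failure is not just of your construction: with $d=1$ (or any integer $d$) one can show $x\mid a_i$ and $y\mid a_j$ force $5\mid i-j$, so the two primes over $5$ cannot occupy different index classes; a bad progression of length $7\le n\le16$ does exist, but only with a non-integer common difference (e.g.\ $d=x+y$, $a=x(y-1)$ gives $x\mid a_1,a_6$ and $y\mid a_2,a_7$). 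Repairing Case B therefore requires a genuinely different construction, not merely a patched inverse. (Case A, $n>16$, is fine and matches the paper.)
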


\begin{proof}
(i) Assume, on the contrary, that $n \leq \min{\{16,1+{\delta}_\RR\}}$ and that there exists an arithmetic progression $a_1,\ldots,a_n$ in $\RR$ with $\gcd{(a_1,a_2)}=1$ such that no term is relatively prime to the rest. Let $d=a_2-a_1$. For each $i\in\{1, \dots , n\}$, there exists a $j_i\in\{1, \dots , n\}$ such that $a_i$ is not coprime to $a_{j_i}$. Then there exists a prime $P_i$ dividing $\gcd{(a_i,a_{j_i})}$. Let $\pi$ be the product of the distinct primes in $\{P_1,\ldots,P_n\}$, say
$$
\pi=P_{i_1} P_{i_2} \cdots P_{i_k}.
$$
Let $r_i=\gcd{(\pi,a_i)}$. Note that no $r_i$ is relatively prime to all the $r_j (j\neq i)$. For any prime $P\in \RR$ let $I_P=\{i:P\mid r_i\}$. Suppose 
$$
\bigcup_{j=1}^{k}\{I_{P_{i_j}}\}=\{I_{Q_1},I_{Q_2},\cdots,I_{Q_l}\}.
$$
where each $Q_m$ is one of the $P_{i_j}$. Now define
$$
s_i=\gcd{(r_i,Q_1 \cdots Q_l)} \quad (1\leq i \leq n).
$$
Again note that no $s_i$ is relatively prime to all the $s_j (j\neq i)$. By the choice of the $s_i$, it follows that if $Q_s \neq Q_t$, then 
$$
\left|\{i:Q_s \mid s_i\}\right|\geq 2 \quad \text{ and } \quad 
\{i:Q_s \mid s_i\} \neq \{i:Q_t \mid s_i\}.
$$
Note that $a_i \equiv 0 \imod{s_i}$. If $P$ is a prime dividing some $s_l$, then $P$ also divides some $s_m (l \neq m)$. Then $P \mid (l-m)1_\RR$. If $p\in \Z$ is the positive prime such that $P\RR\cap \Z_{\RR}=p\Z_{\RR}$, then it follows that $p \mid (l-m)$. Thus $p \leq n-1 \leq {\delta}_\RR$. This implies that each $s_i (1 \leq i \leq n)$ is a product of primes each of which lies over $p \cdot 1_{\RR}$ for some positive prime $p\in \Z$ not exceeding ${\delta}_\RR$. Also, any prime $P$ lying over ${\delta}_\RR \cdot 1_{\RR}$ can appear in the factorization of some $s_m$ only if $n={\delta}_\RR+1$ and in this case $P$ necessarily divides both $s_1$ and $s_n$ and no other $s_i$. Also, in this case no other prime $Q \in \RR$ lying over ${\delta}_\RR \cdot 1_{\RR}$ can divide $s_1$ or $s_n$ (since $\{i:P \mid s_i\} \neq \{i:Q \mid s_i\}$). Now, the system
$$
z  \equiv -(i-1)d  \imod{s_i}, \quad 1\leq i \leq n, 
$$
possesses a solution, namely $z=a_1$ in $\RR$. This implies that 
$$
\gcd{(s_i,s_j)} \mid (i-j)d \cdot 1_{\RR}\quad \text{for} \quad 1 \leq i,j \leq n.
$$
 Also note that $\gcd{(s_i,d)}=1$ for all $i$ (otherwise the fact that the first term is coprime to the common difference is contradicted). Thus we have 
\begin{equation}
\label{eq1}
\gcd{(s_i,s_j)} \mid (i-j)\cdot 1_{\RR} \quad \text{for} \quad 1\leq i,j \leq n.
\end{equation}
  For each $i\in\{1, \dots , n\}$ let $t_i \in \Z$ denote the unique positive integer such that $s_i \RR \cap \Z_{\RR} = t_i\Z_{\RR}$. Further, for $j\in\{1, \dots , n\}$, let $t_{ij}=\gcd{(t_i,t_j)}$ and $s_{ij}=\gcd{(s_i,s_j)}$. Then $t_{ij}\in t_i\Z_{\RR}+t_j\Z_{\RR}$ and consequently
\begin{equation}
 \label{eq2}
s_{ij}\in \RR s_i+\RR s_j.
\end{equation}
From \eqref{eq1} and \eqref{eq2} together with Lemma~\ref{cong}, it follows that the system
$$
z  \equiv  (1-i)1_{\RR} \imod{s_i}, \quad i = 1, \dots ,  n, 
$$
possesses a solution in $\RR$. By the choice of $s_i$ and $t_i$ it is easily seen that 
$$
s_{ij}\RR \cap \Z_{\RR} = t_{ij} \Z_{\RR}.
$$
Consequently 
$$
\gcd{(t_i,t_j)} \mid (i-j) \quad \mbox{for} \quad 1\leq i,j \leq n
$$
and so by Lemma~\ref{cong}, the system
$$
z  \equiv  1-i \imod{t_i},   \quad i = 1, \dots ,  n,  
$$
possesses a solution, say $x$, in $\Z$. Note that no $t_i$ is relatively prime to all the others. 
Now $x,x+1,\ldots,x+n-1$ is a sequence of $n$ consecutive integers such that none of them is relatively prime to all the rest. 
Hence by Corollary~\ref{Consec}, $n>16$, which is a contradiction. This proves (i). 

(ii) If $n>16$, then by Remark~\ref{evans} there exists an arithmetic progression $\AP(a,d,n)$ in $\Z$ (and hence in $\Z_{\RR}$) 
such that none of its terms is relatively prime to all the others. 
Since no prime in $\Z_{\RR}$ is a unit in $\RR$ we are through. So we only need to consider progressions where $1+{\delta}_\RR<n<17$. For ${\delta}_\RR<17$ let $P,Q$ be distinct primes in $\RR$ dividing ${\delta}_\RR 1_{\RR}$. Let $z \in \RR$ be a solution of the system
\begin{eqnarray*}
 z & \equiv & 0  \left(\text{mod} \ \, 2 \cdot 3 \cdot 5 \cdot 7 \cdot 11 \cdot 13 \cdot \frac{P}{{\delta}_\RR} \right) \\
z+1_\RR & \equiv &  0  \imod{Q}.  \\ 
\end{eqnarray*}
Then for $1+{\delta}_\RR<n<17$, we claim that the progression 
$z, \, z+1_\RR,\, \ldots, \, z+(n-1)1_\RR$  
contains no term relatively prime to all the others. 
Indeed, $z$ and $z+1_\RR$ share a common factor with $z+{\delta}_\RR 1_\RR$ and $z+({\delta}_\RR+1)1_\RR$ respectively, whereas all other terms in the progression share a common factor with either $z$ or $z+1_\RR$. This completes the proof of Theorem~\ref{gen}.
\end{proof}

\begin{remark}
In part (i) of Theorem~\ref{gen}, the hypothesis that $\RR$ is $\sigma$-atomic is crucial. For example, if $\RR$ is the ring $\mathbb{A}$ of all algebraic integers (all complex numbers integral over $\Z$), then ${\delta}_{\mathbb{A}}=\infty$ since there are no prime elements in $\mathbb{A}$. However, the progression
$$
\frac{\sqrt{17}+3}{2}, \; \frac{\sqrt{17}+5}{2}, \; \frac{\sqrt{17}+7}{2}, \; \frac{\sqrt{17}+9}{2}
$$
contains no element coprime to the others. 
Also, in part (ii) of Theorem~\ref{gen}, the assumption that no prime in $\Z$ is a unit in $\RR$ is necessary. 
For example, if $\RR$ is a field, then every arithmetic progression in $\RR$ with at least one nonzero term contains a term that 
is relatively prime to all the others.
\end{remark}

\begin{remark}
 Note that only values of ${\delta}_\RR\leq 13$ can affect the permissible values of $n$ such that $\AP(a,d,n)$ contains a term coprime to the others. For the purposes of the above theorem it is unnecessary to determine ${\delta}_\RR$ if it is known to be greater than $13$ (this of course includes the case ${\delta}_\RR=\infty$).
\end{remark}

%

\begin{definition}
 An element of an integral domain $R$ is said to be a \emph{perfect power} if it can be expressed in the form $t^r$ where $t$ is a nonzero, non-unit in $R$ and $r$ is a positive integer $> 1$.  
\end{definition}
%

\begin{corollary}
 Let $\RR$ be a UFD of characteristic 0 and let $a,d\in \RR$ be coprime. 
 If $n$ is a positive integer $\leq \min{\{16,1+{\delta}_\RR\}}$ and if $\AP(a,d,n)$ contains no units and no perfect powers, 
 then the product 
$
\prod_{k=0}^{n-1}{\left(a+kd\right)}$ is not a perfect power.
\end{corollary}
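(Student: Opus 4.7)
The plan is to mirror the argument used in the integer analogue (the corollary proved just after Corollary~\ref{Consec}), with Theorem~\ref{gen}(i) taking the role of Theorem~\ref{progressions} and unique factorization in $\RR$ doing the prime bookkeeping.

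If $n=1$ then the product is $a$, which by hypothesis is not a perfect power, so the conclusion is immediate. Assume henceforth $n \geq 2$. Since $n \leq \min\{16,\,1+\delta_\RR\}$ and $\gcd(a,d)=1$, Theorem~\ref{gen}(i) produces a term $x = a + kd$ of $\AP(a,d,n)$ that is coprime to every other term. The hypothesis that $\AP(a,d,n)$ contains no units guarantees that $x$ is a non-unit of $\RR$.

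I would then argue by contradiction. Suppose $P := \prod_{k=0}^{n-1}(a+kd) = t^r$ for some non-unit $t \in \RR$ and integer $r \geq 2$, and set $y := P/x$, so that $P = xy$ with $\gcd(x,y) = 1$. Because $\RR$ is a UFD, each prime dividing $P$ occurs in exactly one of $x$ or $y$, and reading $P = t^r$ prime by prime yields a factorization
\begin{equation*}
x = u_0 \cdot s^r
\end{equation*}
for some unit $u_0 \in \RR$ and some $s \in \RR$, with $s$ a non-unit (since $x$ is). The aim is to conclude from this that $x$ itself is a perfect power in $\RR$ and thereby contradict the hypothesis.

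The main obstacle lies in this last step: absorbing the unit $u_0$. In $\Z$ positivity forces $u_0 = 1$ and so $x = s^r$ on the nose, which is precisely why the integer version goes through so easily; for a general UFD, expressing $x$ literally as an $r$-th power of a non-unit requires $u_0$ to be itself an $r$-th power in the unit group of $\RR$, and this unit bookkeeping is the only part of the argument that is not a direct transcription of the integer proof. Once $x$ has been identified with a perfect power of a non-unit, the hypothesis that $\AP(a,d,n)$ contains no perfect powers is contradicted and the corollary follows.
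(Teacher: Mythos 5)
Your skeleton is the same as the paper's intended argument: apart from degenerate cases, invoke Theorem~\ref{gen}(i) to obtain a term $x=a+kd$ coprime to all the others, write the product $P$ as $x\cdot y$ with $\gcd(x,y)=1$, and conclude from $P=t^r$ and unique factorization that $x=u_0s^r$ with $u_0$ a unit and $s$ a nonzero non-unit. (The zero-term case, which the paper disposes of first, is automatic in your setup: under the contradiction hypothesis $P=t^r$ with $t\neq 0$, every term is nonzero, and your $n=1$ case is fine.) The problem is that your argument stops exactly at the decisive step. You correctly observe that contradicting the hypothesis would require $u_0$ to be an $r$-th power in the unit group of $\RR$, but you never establish this; you simply assert that ``once $x$ has been identified with a perfect power of a non-unit'' the proof is complete. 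That identification is precisely what is missing, so as written the proposal has a genuine gap.

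Moreover, the gap cannot be closed under the paper's literal definition of perfect power, which demands an exact equality $x=t^r$ with $t$ a nonzero non-unit: already in $\RR=\Z$ (where $\delta_\Z=\infty$) take $a=-4$, $d=-5$, $n=2$, so that $\AP(a,d,n)=\{-4,-9\}$. Here $a$ and $d$ are coprime, neither term is a unit, and neither is a perfect power in the stated sense, yet the product is $36=6^2$. So the unit $u_0$ really can obstruct the conclusion, and the statement only holds if ``perfect power'' is understood up to multiplication by units (equivalently, if associates of $r$-th powers are also excluded from the progression); with that reading, $x=u_0s^r$ with $s$ a non-unit is itself forbidden and your argument closes at once --- which is evidently what the paper's one-line proof (``the result follows readily from Theorem~\ref{gen}'', after separately noting that a zero term makes the product $0$, not a perfect power) does silently. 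In short, you put your finger on the right difficulty, but a complete proof must either absorb the unit (impossible in general) or adjust the reading of the hypothesis; leaving it as an acknowledged obstacle is not a proof.
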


\begin{proof}
If some term of $\AP(a,d,n)$ is zero, then so is the product and $0$ is not a perfect power, by definition. Otherwise, the 
result follows readily from Theorem~\ref{gen}.
\end{proof}

\end{document}